\renewcommand{\subset}{\subseteq}
\renewcommand{\root}{r}
\newcommand{\NN}{\mathbb{N}}
\newcommand{\ZZ}{\mathbb{Z}}
\newtheorem{theorem}{Theorem}
\newtheorem{proposition}[theorem]{Proposition}
\newtheorem{lemma}[theorem]{Lemma}
\newtheorem{corollary}[theorem]{Corollary}
\theoremstyle{remark}
\newtheorem{remark}[theorem]{Remark}
\begin{document}
\title{The nodal cubic and
quantum groups at roots of unity}
\author{Ulrich Kr\"ahmer}
\author{Manuel Martins}
\address{TU Dresden, Institute of Geometry,
01062 Dresden}
\email{ulrich.kraehmer@tu-dresden.de,\newline
 \phantom{Eilil addressii} 
manuel.martins@tu-dresden.de}
\begin{abstract}
In a recent article, the coordinate ring of
the nodal cubic was given the structure of a
quantum homogeneous space. Here the
corresponding coalgebra Galois extension 
is expressed in terms of quantum
groups at roots of unity, and is shown to be
cleft. Furthermore, the minimal quotient
extensions are determined.
\end{abstract}
\subjclass[2010]{81R50,81R60,16T20,16T05}
\thanks{M.~M. is supported by FCT (Portugal) grant 
SFRH/BD/128622/2017. It is our pleasure to thank Ken Brown,
Angela Tabiri and the referees for their 
comments, suggestions and
discussion.}

\maketitle

\section{Introduction}
In \cite{kt}, the
coordinate ring $B$ of the nodal cubic
is embedded as a right coideal subalgebra
into a Hopf algebra which 
is free over $B$. 
In this paper, we determine a minimal Hopf algebra $A$ with this property,
and show that  
the Galois extension $B \subset A$ is
cleft. The main result is:

\begin{theorem}\label{maintheorem}
\label{thm introduction}
Let $k$ be a field containing a primitive
third root $\root$ of unity, 
and $A$ be the Hopf algebra  
over $k$ generated 
by $a, b,x$ and $y$ satisfying 
\begin{gather*}
ba = ab,\qquad 
ya = ay,\qquad 
bx = xb,\qquad
yx = xy,\qquad
by = -yb,\\
	x + axa^2 + a^2xa - a + 1 = 0,\qquad
	x^2 + ax^2a^2 + xaxa^2 + x + axa^2 = 0,\\
y^{2} = x^{2} + x^{3},\qquad
b^{2} = a^{3} = 1 ,\qquad
	F^3=0,
\end{gather*}
where $F:=xa + (r+1)ax + \frac{r+2}{3} 
\left(a - a^2\right)$, and 
\begin{gather*}
\Delta (a)=a \otimes a,\qquad
\Delta (b) = b \otimes b,\\
\Delta (x) = 1 \otimes x+x \otimes a,\qquad
\Delta (y) = 1 \otimes y+y \otimes b.
\end{gather*}
Let $p,q \in k$ satisfy
$p^2=q^2+q^3$, and
$B,C \subset A$ be the
subalgebras generated by 
$x+qa,y+pb$ and by $a,b,F$, respectively. 
Then we
have:
\begin{enumerate}
\item The subalgebra $B$ is a right coideal 
and isomorphic to the coordinate ring of the
nodal cubic, and $C$ is a 
Hopf subalgebra.
\item 
Multiplication in $A$ defines an
isomorphism $C \otimes B \cong A$. 
\item If $ \mathrm{char}(k) \neq 2$ and
$0 \neq I \subset A$ is a Hopf ideal, 
then $B \cap I \neq 0$.
\end{enumerate}
\end{theorem}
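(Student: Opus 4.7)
For (1), the coideal property of $B$ is immediate from the rewrites $\Delta(x+qa) = 1 \otimes x + (x+qa) \otimes a$ and $\Delta(y+pb) = 1 \otimes y + (y+pb) \otimes b$, both of which manifestly lie in $B \otimes A$. The identification of $B$ with $k[X,Y]/(Y^2 - X^2 - X^3)$ then reduces to verifying the single relation $(y+pb)^2 = (x+qa)^2 + (x+qa)^3$ inside $A$: the left-hand side simplifies to $x^2 + x^3 + p^2$ by $by = -yb$, $b^2 = 1$, and $y^2 = x^2 + x^3$, while the right-hand side reduces to $x^2 + x^3 + q^2 + q^3$ by applying the two polynomial identities relating $x$ and $a$ (after multiplying them through by appropriate powers of $a$), and the stipulation $p^2 = q^2 + q^3$ is exactly what balances them. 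Surjectivity of the induced map from the nodal cubic is tautological; injectivity will come for free from (2). For $C$, the key observation is the direct calculation
\[
\Delta(F) = a \otimes F + F \otimes a^2,
\]
exhibiting $F$ as $(a, a^2)$-skew-primitive, so $\Delta(F) \in C \otimes C$ and $S(F) = -a^2 F a \in C$.

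For (2), the plan is to produce explicit bases. A natural basis of $C$ is $\{a^i b^j F^k : 0 \le i \le 2,\ j \in \{0,1\},\ 0 \le k \le 2\}$ (eighteen elements), and of $B$, identified with the nodal cubic, is $\{(x+qa)^m,\ (y+pb)(x+qa)^m : m \ge 0\}$. Surjectivity of the multiplication map $C \otimes B \to A$ is clear from $x = (x+qa) - qa$, $y = (y+pb) - pb$, together with closure of $CB$ under multiplication, which one checks by pushing $B$-generators past $C$-generators using the commutation and mixing relations to straighten products. Injectivity is the substantive step: Bergman's diamond lemma, orienting the relations so that the proposed basis consists of the irreducible monomials and verifying confluence of the overlap ambiguities, seems the cleanest route. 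Alternatively, constructing a convolution-invertible colinear cleaving map $C \to A$, for instance sending each basis monomial $a^i b^j F^k$ to itself, would establish both the isomorphism $C \otimes B \cong A$ and the cleftness assertion advertised in the introduction in one stroke.

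Part (3) is the delicate assertion and I expect it to absorb most of the work. The strategy is by contradiction: assume $0 \ne I \subset A$ is a Hopf ideal with $I \cap B = 0$, and extract a nonzero element of $I \cap B$ from any nonzero element of $I$. Using the decomposition $A \cong C \otimes B$ from (2) and the Hopf-subalgebra structure of $C$, I would combine (a) the coideal property $\Delta(I) \subset I \otimes A + A \otimes I$, (b) multiplication by group-likes in $\langle a, b\rangle$, and (c) the mixing relations $x + axa^2 + a^2xa = a - 1$ and $x^2 + ax^2a^2 + xaxa^2 + x + axa^2 = 0$ to isolate the $B$-component of a carefully chosen element of $I$. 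The characteristic-two hypothesis should enter through the $b$-sector: over such a field $b - 1$ becomes nilpotent, $k\langle b\rangle$ ceases to be semisimple, and the skew-primitive $y$ with $\Delta(y) = 1 \otimes y + y \otimes b$ would admit additional Hopf ideals that might evade $B$. The main obstacle I foresee is making the $C$-versus-$B$ separation argument airtight; in practice this seems to require either a classification of the normal Hopf subalgebras of the finite-dimensional $C$, together with a verification in each case that the mixing relations force a nonzero element into $I \cap B$, or a cleaner conceptual argument exploiting that $C$ is generated by its primitive and skew-primitive elements.
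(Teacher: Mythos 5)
Your treatment of parts (1) and (2) is essentially the paper's own route: the coideal property and the single relation $(y+pb)^2=(x+qa)^2+(x+qa)^3$ for (1), the computation $\Delta(F)=a\otimes F+F\otimes a^2$ for the Hopf subalgebra $C$, and Bergman's diamond lemma applied to a rewritten presentation in the generators $a,b,F,x+qa,y+pb$ to get the monomial basis $\{a^ib^jF^l(x+qa)^m(y+pb)^n\}$, from which both the freeness of $A$ over $B$ and the injectivity of the nodal cubic into $B$ fall out. That part is sound, modulo actually carrying out the confluence check.

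Part (3), however, is a genuine gap: you have a strategy sketch with an obstacle you yourself flag as unresolved, and the sketch as stated does not close. The missing idea is to exploit that $A$ is \emph{pointed} and to reduce injectivity of a Hopf quotient $\rho\colon A\to H$ to injectivity on the coradical and on the spaces $P_{(g,h)}(A)$ of skew-primitives (Radford, Proposition 4.3.3). This requires first computing all the $P_{(1,g)}(A)$, which is the real work: one passes to the finite-dimensional quotient of $\tilde A$ by the central primitives $x^2+x^3$ and $F^3$, finds the skew-primitives there, and then needs a separate lemma (the paper's Lemma~\ref{centprim}) showing that the only skew-primitives inside an ideal $Hh$ generated by a central primitive $h$, when $H$ is free over $k[h]$, are the scalar multiples of $h$. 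Without that lemma you cannot rule out ``hidden'' skew-primitives in $\tilde A(x^2+x^3)+\tilde AF^3$, and without the full list of skew-primitives the case-by-case check (for $h=1,a,b$ and all other group-likes) that $\ker\rho\cap B=0$ forces $\rho$ to be injective on each $P_{(1,h)}$ cannot even be set up. Your proposed alternative of classifying normal Hopf subalgebras of $C$ does not suffice either, since the relevant Hopf ideals of $A$ need not come from $C$. Finally, the role of $\mathrm{char}(k)\neq 2$ is not where you place it: it is used to show the induced map on group-likes is injective, namely if $\rho(b)=1$ then $by+yb=0$ gives $2\rho(y)=0$, hence $\rho(y)=0$ and then $x^2+x^3=(y+pb)^2-p^2\in\ker\rho\cap B$, a contradiction; the analogous argument with $a$ uses the mixed relations in $x$ and $a$.
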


Notice that $B \subset A$ is a
coalgebra Galois extension.
The coalgebra in
question is
$A/AB^+$, where $B^+=B \,\mathbin{\cap}\, \mathrm{ker}\,
\varepsilon $. Since $AB^+ \neq B^+A$, this
is not a (Hopf) algebra quotient of $A$. As such, the Hopf subalgebra $C$ in \cref{thm introduction} is  
only isomorphic to $A/AB^+$ as 
coalgebras. 
That $B \subset A$ is a cleft extension
follows immediately from Theorem~1 (2), 
cf.~\cite[Proposition 2.3]{br}. Part (2) of
Theorem~\ref{maintheorem} also implies that $A$ is
free and hence faifhfully flat as a right
$B$-module. Thus $B$ is a \emph{quantum
homogeneous space} in the sense of 
\cite{ms}.

We remark that the Hopf algebra 
$ u_\root (\mathfrak{sl}_2) \otimes 
u_{-1} (\mathfrak{sl}_2) $ is obtained as  
a quotient of $A$ by adding the relation 
$y^2=x^2+x^3=0$. Here, 
$ u_\root (\mathfrak{sl}_2) $ is the
Frobenius-Lusztig
kernel associated to
$U_\root(\mathfrak{sl}_2) $
(see e.g.~\cite[Definition~VI.5.6]{kassel} 
or \cite[Section 3.3.1]{klimyk}) 
and by
a slight abuse of notation,
$u_{-1} (\mathfrak{sl}_2)$ denotes Sweedler's
4-dimensional Hopf algebra.  
The other simple root vector 
in $u_\root(\mathfrak{sl}_2)$ is the class of 
$$
	E := xa-\root ax +\frac{1-\root}{3}
	(a-a^2).
$$

The classification of Hopf algebras in terms of
their Gelfand-Kirillov dimension has been the focus of recent interest,
see~e.g.~\cite{brownzhang,goodearl,liu,wuliu}. 
Prompted by a question from Ken Brown, we show
in \cref{prop properties} that $A$ is
Noetherian of Gelfand-Kirillov dimension
one, but is not semiprime.

In \cite{t}, the construction of quantum homogeneous space structures is extended to more general decomposable plane curves.  Hence an interesting question for future research is whether the results in this note also extend to these curves.

The paper is organised as follows: the next
section contains 
the proof of (1) and (2) in the main theorem, and explains the relation between this paper
and previous results from \cite{kt}.  
Section~\ref{generalcase} contains 
an analogue of Theorem~1 which 
does not require the base field $k$
to contain a primitive third root of unity. 
However, the link to small quantum groups is
lost.
The final section 
discusses the Hopf algebra $A$ from the
perspective of the classification of pointed Hopf
algebras, i.e.~we describe the braided vector
space of skew primitive elements.  This is then
used to prove Theorem~1 (3). We also describe the Nichols algebra of
$A$.   

\section{The cleft extension $B \subset A$}
Throughout this section, let $k$ be a field
containing a primitive third root of unity
$r$, and let $p, q \in k$ be such that $p^2 = q^2 + q^3$.

\subsection{On the origin of relations}
We begin by pointing out that the defining
relations in \cite{kt} can be obtained from
Manin's approach to quantum groups \cite{m}.
To that end, one determines the universal
Hopf algebra coacting by a given formula.

\begin{proposition}
	\label{universal bialgebra}
Assume that $U$ is a Hopf algebra,
$ \rho \colon B \rightarrow B \otimes U$
is a right coaction on 
$B=k[s,t]/\langle s^2+s^3-t^2\rangle$ and that we
have
$$
	\rho (s) = 1 \otimes x + s \otimes a,\quad
	\rho (t) = 1 \otimes y + t \otimes b
$$ 
for elements $a,b,x,y \in U$. 
Then $B$ is a $U$-comodule algebra if and
only if the following relations hold in $U$: 
\begin{gather*}
ba = ab,\qquad 
ya = ay,\qquad 
bx = xb,\qquad
yx = xy,\qquad
by = -yb,\\
	a^2x + axa + xa^2 + a^2-a^3 = 
	x^2a + xax + ax^2 + xa + ax = 0,\\
y^{2} = x^{2} + x^{3},\qquad
b^{2} = a^{3}.
\end{gather*}
\end{proposition}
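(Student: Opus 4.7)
The plan is to work directly from the algebraic characterisation of a comodule algebra: since we are given that $\rho$ is a coaction, what remains is to decide when $\rho$ is also a $k$-algebra map. Unit preservation is automatic, so the question reduces to whether the prescribed images $\rho(s) = 1 \otimes x + s \otimes a$ and $\rho(t) = 1 \otimes y + t \otimes b$ respect the two defining relations of $B = k[s,t]/\langle s^{2}+s^{3}-t^{2}\rangle$, namely
\[
\rho(s)\rho(t) = \rho(t)\rho(s) \qquad \text{and} \qquad \rho(s)^{2} + \rho(s)^{3} = \rho(t)^{2}.
\]

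I would next fix a convenient $k$-basis of $B$: since $t^{2} = s^{2}+s^{3}$, the family $\{s^{i}\}_{i \geq 0} \cup \{s^{i}t\}_{i \geq 0}$ is a basis, so in particular the sets $\{1,s,t,st\}$ and $\{1,s,s^{2},s^{3},t\}$ are linearly independent over $k$. Hence an expression of the form $\sum_{i} s^{i} \otimes u_{i} + \sum_{j} s^{j} t \otimes v_{j}$ in $B \otimes U$ vanishes if and only if each $u_{i}, v_{j} \in U$ vanishes; this is the mechanism by which the relations in $U$ will be read off.

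Expanding the commutator gives
\[
\rho(s)\rho(t) - \rho(t)\rho(s) = 1 \otimes (xy - yx) + s \otimes (ay - ya) + t \otimes (xb - bx) + st \otimes (ab - ba),
\]
from which the first four relations $ab=ba$, $ay=ya$, $bx=xb$, $xy=yx$ drop out. For the curve relation, expanding $\rho(s)^{2}$, $\rho(s)^{3} = \rho(s)^{2}\rho(s)$ and $\rho(t)^{2}$ and substituting $t^{2} = s^{2}+s^{3}$ produces
\begin{align*}
\rho(s)^{2} + \rho(s)^{3} - \rho(t)^{2} ={}& 1 \otimes (x^{2} + x^{3} - y^{2})\\
&+ s \otimes (xa + ax + x^{2}a + xax + ax^{2})\\
&+ s^{2} \otimes (a^{2} + a^{2}x + axa + xa^{2} - b^{2})\\
&+ s^{3} \otimes (a^{3} - b^{2}) - t \otimes (by + yb).
\end{align*}
Setting each coefficient to zero, and then using $a^{3}=b^{2}$ (the $s^{3}$-coefficient) to simplify the $s^{2}$-coefficient, yields precisely the remaining relations in the statement.

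The main obstacle is purely bookkeeping in the cubic expansion; there is no conceptual subtlety. The Hopf structure on $U$ plays no role in this check, and the converse implication comes for free from the same computation, since if the listed relations hold then every coefficient above vanishes and the resulting map is a well-defined algebra homomorphism, hence a comodule algebra coaction.
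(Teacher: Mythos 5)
Your proof is correct and takes essentially the same route as the paper: reduce the comodule-algebra condition to the requirement that $\rho(s)$ and $\rho(t)$ satisfy the defining relations of $B$, expand, and use the freeness of $B\otimes U$ as a $B$-module (equivalently, the linear independence of $1,s,s^2,s^3,t,st$) to get necessity. The only difference is that you carry out explicitly the expansion that the paper dismisses as inserting the formulas, and your coefficients all check out.
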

\begin{proof}
The coaction $ \rho $ turns $B$ into a
$U$-comodule algebra (i.e.~$ \rho $ is
an algebra map) if
and only if the elements 
$ \rho (s)$ and $ \rho (t)$ of $B \otimes U$
satisy the defining relations of $B$,
$$
	\rho (s) \rho (t) = 
	\rho (t) \rho (s), \qquad
	\rho (s)^2 + 
	\rho (s)^3 =  
	\rho (t)^2.
$$
Inserting the explicit formulas of $ \rho (s)$
and $\rho (t)$ shows that the stated commutation
relations in $U$ are sufficient for $\rho $
to be an algebra map. That they are also
necessary follows from the freeness of 
$B \otimes U$ as a $B$-module.
\end{proof}

From now on, we denote by $U$ the universal
Hopf algebra with the above properties. Note
that the coassociativity of $ \rho $ implies
that $a,b$ are necessarily group-like (and
hence invertible) and 
$x,y$ are $(1,a)$- respectively $(1,b)$-primitive.
Thus $U$ is generated as an algebra 
by $a,a^{-1},b,b^{-1},x,y$ satisfying the
above relations. Furthermore, being generated by group-likes and skew primitives, $U$ is a pointed Hopf algebra.
 
The commutation relations in \cref{universal bialgebra} together with the relation $p^2 = q^2 + q^3$ imply that
$$
(y+pb)^2 = (x+qa)^2 +  (x+qa)^3.
$$
Hence, by a slight abuse of notation, we hereby
identify $s$ and $t$ with $x+qa \in U$ and 
$y+pb \in U$, 
respectively, and view $B$ as a subalgebra of $U$. We address the choice of the parameters $(p,q)$ and the connection to \cite{kt} in \cref{subsection parameters}.

\subsection{Reduction of the Hopf algebra}
Our aim is to find 
minimal quotient Hopf algebras of $U$
which contain $B$ as a quantum homogeneous
space. 
The Hopf algebra 
$A$ in \cref{maintheorem} is obtained from $U$
by adding the relations
$b^2=a^3=1$ and $F^3=0$, where
$$F:=xa + (r+1)ax + \frac{r+2}{3} 
\left(a - a^2\right).$$
That the former relation can be added is
straightforward: as $x$ commutes with $b$ and
$y$ commutes with $a$, the element 
$b^2=a^3$ is a central group-like, and the
PBW basis of $U$ presented in \cite{kt}
shows that $B$ embeds into the resulting
quotient.   

The following lemma establishes the
commutation relations between $F$ and the
generators $x+qa,y+pb$ of $B$. This is used
to show that the class of $F^3$ spans a Hopf
ideal in $\tilde A:=U/ \langle a^3-1\rangle$ and that this ideal intersects $B$ trivially. Hence, taking the
quotient by this ideal yields a Hopf
algebra which contains $B$. This is
precisely the Hopf algebra $A$ from
\cref{maintheorem}. 

\begin{lemma}\label{lem z} 
In $\tilde A=U/ \langle a^3-1\rangle$, we have:
\begin{enumerate}
\item The (class of) $F$ satisfies 		 		
		\begin{gather*}
			aF = r^2Fa, \qquad bF = Fb,\qquad
	(y+pb)F=F(y+pb), \\
			(x+qa)F = rF(x+qa)  + 
	\frac{3q+1}{3}(r+2) aF + 
	\frac{r-1}{3} F +  
	\frac{1}{3} (a-1),
	\end{gather*}
	and
	\begin{gather*}
			\Delta(F) = a \otimes F + F \otimes a^2.
		\end{gather*}
\item The element $F^3 \in \tilde A$ is central and
		primitive. 
\item The set 
		$$
		\{a^i b^j F^l 
	\mid 
		i \in \{0,1,2\}, j \in \{0,1\}, 
		l \in \NN\}
		$$
		is a right $B$-module basis of $\tilde A$.
\end{enumerate}
\end{lemma}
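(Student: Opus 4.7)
The plan is to prove (1), (2), (3) in order, with (3) as the main obstacle.

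For (1), each identity is verified by direct computation. The commutations $bF = Fb$ and $(y+pb)F = F(y+pb)$ are immediate, since $b$ and $y$ both commute with $a$ and $x$ (by $ba=ab$, $bx=xb$, $ya=ay$, $yx=xy$). The identity $aF = r^2 Fa$ is obtained by expanding and reducing via $a^3 = 1$ and the quadratic relation $a^2 x + axa + xa^2 = 1 - a^2$, and hinges on $1 + r + r^2 = 0$ together with the numerical identity $(r+2)(r^2-1) = 3r^2$. The longer commutation $(x+qa)F = rF(x+qa) + \cdots$ is handled analogously, additionally invoking the cubic relation $x^2 a + xax + ax^2 + xa + ax = 0$ and the constraint $p^2 = q^2 + q^3$. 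The coproduct formula $\Delta(F) = a \otimes F + F \otimes a^2$ follows from $\Delta(xa) = a \otimes xa + xa \otimes a^2$, $\Delta(ax) = a \otimes ax + ax \otimes a^2$, and $\Delta(a - a^2) = a \otimes a - a^2 \otimes a^2$, with the coefficient $\tfrac{r+2}{3}$ ensuring the needed cancellations.

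For (2), both assertions follow from (1). For primitivity of $F^3$, set $\alpha := a \otimes F$ and $\beta := F \otimes a^2$ in $\tilde A \otimes \tilde A$. From $aF = r^2 Fa$ one deduces $a^2 F = r Fa^2$, so
\[
\alpha\beta = r^2 Fa \otimes Fa^2, \qquad \beta\alpha = r Fa \otimes Fa^2,
\]
i.e.\ $\alpha\beta = r \beta\alpha$. The quantum binomial theorem then yields $\Delta(F^3) = (\alpha + \beta)^3 = \alpha^3 + \beta^3 = 1 \otimes F^3 + F^3 \otimes 1$, the middle $r$-binomial coefficients vanishing because $[3]_r = 1 + r + r^2 = 0$. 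For centrality, commutation of $F^3$ with $a$, $b$, and $y + pb$ is immediate from (1). The subtle case is commutation with $s := x + qa$: writing the relation from (1) as $sF = rFs + M$ and iterating, one finds $[s, F^3] = r^2 F^2 M + r FMF + MF^2$, and after reordering via $aF = r^2 Fa$ (so that $F^2 a = r^2 aF^2$ and $FaF = r aF^2$), every coefficient collapses to a scalar multiple of $1 + r + r^2 = 0$.

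Part (3) is the main obstacle: it amounts to showing that multiplication induces a linear isomorphism $\tilde C \otimes B \to \tilde A$, where $\tilde C$ is the $k$-span of $\{a^i b^j F^l\}$. My plan is to use a filtration argument. Equip $\tilde A$ with the filtration $\deg a = \deg b = 0$, $\deg x = \deg s = \deg F = 2$, $\deg y = \deg t = 3$; all defining relations are then filtered, with top-degree relations $\bar a^2 \bar x + \bar a \bar x \bar a + \bar x \bar a^2 = 0$, $\bar x^2 \bar a + \bar x \bar a \bar x + \bar a \bar x^2 = 0$, and $\bar y^2 = \bar x^3$ in the associated graded algebra $\gr \tilde A$, and $\gr B = k[\bar s, \bar t]/(\bar t^2 - \bar s^3)$ is the coordinate ring of the cusp. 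One checks directly that $\{\bar a^i \bar b^j \bar F^l \bar s^m \bar t^\varepsilon\}$ is a basis of $\gr \tilde A$: spanning uses the leading relation $\bar F = \bar x \bar a + (r+1) \bar a \bar x$ to solve for $\bar x \bar a$ in terms of $\bar F$ and $\bar a \bar x = \bar a \bar s$, permitting elimination of "$\bar x \bar a^j$" substrings and pushing all $\bar s$-factors to the right; linear independence follows from the PBW basis of $U$ recorded in \cite{kt}, which descends to $\tilde A$ and, via leading-term matching, to $\gr \tilde A$. Since the filtration is exhaustive, this graded basis lifts to a $k$-basis $\{a^i b^j F^l s^m t^\varepsilon\}$ of $\tilde A$, which is equivalent to (3). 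The main subtlety is the explicit leading-term bookkeeping needed to reconcile the two indexings, which the graded picture makes tractable.
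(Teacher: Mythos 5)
Your parts (1) and (2) follow the same route as the paper, which simply asserts that these are ``computed in a straightforward manner''; your added detail is correct (e.g.\ $\alpha\beta=r\beta\alpha$ for $\alpha=a\otimes F$, $\beta=F\otimes a^2$, so the Gaussian binomials $1+r+r^2$ kill the cross terms of $(\alpha+\beta)^3$, and the expansion $[s,F^3]=r^2F^2M+rFMF+MF^2$ collapses after reordering) and more informative than the paper's one-liner. For (3) you genuinely diverge. The paper re-presents $\tilde A$ with generators $a,b,F,x+qa,y+pb$, writes the complete set of relations as explicit reduction rules for the lexicographic order $a<b<F<x+qa<y+pb$, and applies Bergman's diamond lemma (equivalently, it realizes $\tilde A$ as a quotient of an iterated Ore extension over the group algebra of $\ZZ_3\times\ZZ_2$); this yields the monomial basis directly and is self-contained, at the cost of checking that all overlap ambiguities are resolvable. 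You instead filter $\tilde A$ (with $\gr B$ the cuspidal cubic), prove spanning in $\gr\tilde A$ by eliminating $\bar x\bar a$ via $\bar F$, and import linear independence from the PBW basis of \cite{kt}. This is workable, but one step deserves to be made explicit: to transport independence from the \cite{kt} basis down to $\gr\tilde A$ by ``leading-term matching'' you need that basis to be compatible with your filtration, i.e.\ that its monomials of degree at most $d$ span $\tilde A_{\le d}$; this holds because the rewriting into that normal form is degree-non-increasing, but it is precisely the kind of control that the paper's rewriting-system presentation delivers for free. In short: the paper's argument is self-contained and produces the normal form in one pass; yours trades the ambiguity check for a dependence on the external PBW basis plus the (triangular, but not entirely trivial) change of variables between $(xa)^l$ and $F^l$ in the associated graded algebra.
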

\begin{proof}
	The commutation relations and the coproduct 
	of $F$ (and consequently those of $F^3$) are computed in a straightforward
	manner, yielding $(1)$ and $(2)$. 

The basis in $(3)$ of $\tilde A$ as a right
$B$-module is derived from its $k$-linear monomial basis
$$
 \{a^i b^j F^l (x+qa) ^m (y+pb)^n 
 \mid 
 i,n \in \{0,1,2\}, j \in \{0,1\}, 
 l,m \in \NN\}.
 $$
That this is a basis 
follows from Bergman's diamond lemma
\cite{bergman}.  To apply the latter, we first derive a new
presentation of $\tilde A$ in terms of
generators $a,b,F,x+qa$ and
$y+pb$ and consider the lexicographic monomial ordering
with $a <
b < F < x+qa < y+pb$. The complete set of relations satisfied by these generators
consists of: the relations in $(1)$, the relations in 
\cref{universal bialgebra}, and the relations 
$a^3 = 1$ and ${F=xa + (r+1)ax + \frac{r+2}{3}
	\left(a - a^2\right)}$, all 
rewritten in terms of the new generators. Then, 
one has to verify that there are
no ambiguities in the relations. This follows by a straightforward computation, since written in the form of explicit reduction rules, those relations are: 
\begin{gather*}
	ba = ab,\qquad b^{2} = a^{3}=1, \\
	Fa = raF, \qquad Fb = bF,
	\\
	(x+qa)a = - (r+1)a(x+qa) + F + \frac{r+2}{3}((1+3q)a^2 - a), 
	\\
	 (x+qa)b = b(x + qa),\\
	 (x+qa)F = rF(x+qa)  + 
	 \frac{1+3q}{3}(r+2) aF + 
	 \frac{r-1}{3} F +  
	 \frac{1}{3} (a-1),
	 \\
	(y + pb)a = a(y + pb),\qquad
	(y+pb)b = -b(y+pb) +2pb^2,\\
	(y+pb) F = F (y+pb),\qquad
	(y+pb)(x+qa) = (x+qa)(y+pb),
\\
(y+pb)^{2} = (x+qa)^{2} + (x+qa)^{3}. \qedhere
\end{gather*} 

\end{proof}

\begin{remark} An alternative argument is to
observe that the relations presented above
also characterize $\tilde A$ as a quotient of
an iterated Ore extension. Starting from the
group algebra of $\ZZ_3 \times \ZZ_2$
(generated by $a$ and $b$), one adds $F$
followed by $x+qa$ and $y+pb$, subject to
skew-commutation relations given in $\mathcal{R}$. It is well-known that an Ore extension $R[t; \sigma, \delta]$
	is the ring generated over
	$R$ by one additional generator $t$, subject
	to skew-commutation relations $ta =
	\sigma(a)t  +\delta(a)$, for all $a \in R$,
	and that the monomials $t^i$ form a basis as
	an $R$-module (which is a particular case of Bergman's diamond lemma), see e.g.~\cite[\S 1.2]{mr} for more details. Then $\tilde A$ is simply the quotient of such an iterated Ore extension by the relation $(y+pb)^{2} = (x+qa)^{2} + (x+qa)^{3}$.

\end{remark}

We also note, that compared to the
proposition in \cite[p.~657]{kt}, besides the choice of another lexicographic order, the role of the independent variable $xa$ in the monomial basis is played by $F$.

We are now ready to prove the first parts of 
\cref{maintheorem}.
\begin{proposition}
		The subalgebra $B \subset A$ 
is a right coideal, and we have:
\begin{enumerate}
\item The elements $a$, $b$ and $F$ generate a Hopf subalgebra $C \subset A$, such that $C \cong A/AB^+$ as coalgebras, where $B^+:=B \cap
		\mathrm{ker}\,\varepsilon $.
\item $A \cong C \otimes B$ as left $C$-comodules
		and right $B$-modules. In particular, $A$ is free, hence faithfully flat, as a $B$-module.
\end{enumerate}
\end{proposition}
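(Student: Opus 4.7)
The plan is to leverage the monomial basis of $\tilde A$ from \cref{lem z}(3), descending it to $A = \tilde A/\langle F^3\rangle$ via the centrality and primitivity of $F^3$ from \cref{lem z}(2). First come two bookkeeping steps. The right coideal property of $B$ is dispatched by directly computing $\Delta$ on $x+qa$ and $y+pb$:
\[
\Delta(x+qa) = 1 \otimes x + (x+qa) \otimes a, \quad \Delta(y+pb) = 1 \otimes y + (y+pb) \otimes b,
\]
each term in $B \otimes A$, and multiplicativity of $\Delta$ extends this to all of $B$. That $C$ is a Hopf subalgebra is equally immediate: the coproducts of $a, b, F$ lie in $C \otimes C$ by \cref{lem z}(1), and the antipode axiom applied to $\Delta(F) = a \otimes F + F \otimes a^2$ forces $S(a) = a^2$, $S(b) = b$, $S(F) = -a^2 F a \in C$.

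The core step is the right $B$-module decomposition. Since $F^3$ is central in $\tilde A$, the ideal $\langle F^3\rangle$ equals $F^3 \tilde A$, and in the basis of \cref{lem z}(3) it is spanned by those $a^ib^jF^l$ with $l \ge 3$. Hence $A$ inherits the right $B$-basis $\{a^ib^jF^l : 0\le i\le 2,\, 0\le j\le 1,\, 0\le l\le 2\}$, which is simultaneously a $k$-basis of $C$, and multiplication gives the right $B$-module isomorphism $C \otimes B \cong A$ claimed in~(2). Under this identification $AB^+$ corresponds to $C \otimes B^+$, yielding the vector space identification $A/AB^+ \cong C$.

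To promote this to a coalgebra isomorphism, I would verify that $AB^+$ is a coideal. For $\alpha \in A$ and $\beta \in B^+$, the right coideal property gives $\beta_{(1)} \in B$ in $\Delta(\beta) = \sum \beta_{(1)} \otimes \beta_{(2)}$, and splitting $\beta_{(1)} = (\beta_{(1)} - \varepsilon(\beta_{(1)})) + \varepsilon(\beta_{(1)})$ decomposes $\Delta(\alpha\beta)$ into a term in $AB^+ \otimes A$ and the term $\sum \alpha_{(1)} \otimes \alpha_{(2)}\beta \in A \otimes AB^+$. The composition $C \hookrightarrow A \twoheadrightarrow A/AB^+$ is then a coalgebra map (both factors being such) that is bijective by the basis argument, proving~(1). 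For the left $C$-comodule structure in~(2), a short Sweedler calculation using $\pi(c_{(1)}\beta_{(1)}) = c_{(1)}\varepsilon(\beta_{(1)})$ in $A/AB^+$ shows that the coaction $\rho_L = (\pi \otimes \mathrm{id})\Delta$ on $A$ pulls back along multiplication to $\Delta_C \otimes \mathrm{id}$ on $C \otimes B$. The main obstacle is the coalgebra identification in~(1), where both the coideal property of $AB^+$ and the coalgebra compatibility of the composition must be tracked; both ingredients ultimately boil down to the right coideal property of $B$ and the rigidity of the basis from \cref{lem z}.
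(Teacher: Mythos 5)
Your proposal is correct and follows essentially the same route as the paper: both rest on the monomial basis from \cref{lem z}(3) descending to $A$, the identification of $C$ with $A/AB^+$ via the composite $C\hookrightarrow A\twoheadrightarrow A/AB^+$, and the bijectivity plus $C$-colinearity of the multiplication map $C\otimes B\to A$. You merely spell out some steps the paper leaves implicit (the coideal property of $AB^+$ and the descent of the basis modulo the central element $F^3$).
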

\begin{proof}
	In view of $(1)$ and $(3)$ of \cref{lem z}, the elements $a$, $b$ and $F$ generate a Hopf subalgebra $C$ of $A$ with vector space basis
	$$
	\{a^i b^j F^l \mid 
	i,l \in \{0,1,2\}, j \in \{0,1\} 
	\}.
	$$
The elements
$(x+qa - q)$ and $(y+pb-p)$ generate the left
ideal $AB^+$. This is also a
coideal, so there is a unique coalgebra
structure on $A/AB^+$ for which the quotient
map $A \rightarrow A/AB^+$ is a coalgebra
map. Composing with the embedding 
$C \rightarrow A$
yields a coalgebra map $C \to A/AB^+$ which
is easily seen to be bijective, using the
basis from \cref{lem z}. This proves (1).

	Multiplication in $A$ defines a right
$B$-linear map $C \otimes B \to A$ which is
bijective in view of \cref{lem z}.(3).
The identification $C \cong
A/AB^+$ turns $A$ into a left $C$-comodule 
whose coaction $ \lambda \colon A \rightarrow
C \otimes A$ is the coproduct of $A$ followed 
by the quotient map on the first component.
Since $B$ is a right coideal
subalgebra, it is contained  
in	
$$
	A^{\text{Co }C} = 
	\{b \in A \mid \lambda (b) = 
	1 \otimes b\},
	$$ 
which implies that the 
multiplication  $C \otimes B \rightarrow A$ 
is left $C$-colinear. 
\end{proof}

\subsection{On the parameters $p$ and $q$}
	\label{subsection parameters}
Note that in \cite{kt}, the
choice of $p,q$ is
incorporated in the defining relations of the
Hopf algebra. However, if $A_{(p,q)}$ denotes
the Hopf algebras defined there (with
generators labelled by the subscript $(p,q)$), then 
\begin{gather*}
	x_{(0,0)} \mapsto x_{(p,q)} -qa_{(p,q)},\quad  
	y_{(0,0)} \mapsto y_{(p,q)}-pb_{(p,q)},\quad \\
	a_{(0,0)} \mapsto a_{(p,q)}, \quad 
	b_{(0,0)} \mapsto b_{(p,q)}
\end{gather*}
extends to Hopf algebra isomorphisms
$A_{(0,0)} \rightarrow A_{(p,q)}$. Using these, we
here embed the 
quantum homogeneous spaces studied in \cite{kt} 
all into $A_{(0,0)}$. The latter is simply the
universal Hopf algebra coacting in the given
way on $B$ described in \cref{universal bialgebra}.

In this way, the elements 
$x+qa$ and $y+pb$ generate for any choice of $(p,q)$, with $p^2 = q^2 + q^3$,
a right coideal subalgebra. As $A$-comodule
algebras, these are all isomorphic to each
other (and to the coordinate ring of the
nodal cubic).

\subsection{Relation to small quantum groups}
Note that if we denote
$$
E :=xa  -\root ax+\frac{1-\root}{3}
(a-a^2), \quad K:= a^2,
$$
then the following relations are satisfied
$$
KE=\root^2 EK,\quad
[E,F] = \frac{K-K^{2}}{\root - \root^2}.
$$
These are the defining relations of the quantum universal enveloping algebra $U_\root(\mathfrak{sl}_2)$. Furthermore, observe that $x$ can be written in terms of $E$, $F$ and $K$ as follows
$$
x = \frac{1-r^2}{3}FK + \frac{1-r}{3}EK + \frac{r-r^2}{3}\left(K^2 - K\right).
$$ 
Thus, we obtain a presentation of $A$ as a
quotient of $U_\root(\mathfrak{sl}_2) \otimes 
U_{-1} (\mathfrak{sl}_2)$ (to be able to
specify $U_q( \mathfrak{sl}_2)$ at $q=-1$, we
refer to the non-restricted version using
integral forms, see e.g.~\cite[\S 9.2]{cp}). The image of 
$U_\root(\mathfrak{sl}_2) \otimes 1$ is 
the subalgebra generated by $a$  and $x$ and 
the image of $1 \otimes U_{-1} ( \mathfrak{sl}_2)$ is
the subalgebra generated by $b$ and $y$.  

Adding the relation $E^3=0$ is equivalent to
adding the relation $x^2+x^3=0$ and yields
the small quantum group
$u_{\root}(\mathfrak{sl}_2) \otimes 
u_{-1} (\mathfrak{sl}_2)$. 
Note, finally, that 
expressing $A$ in this way also reveals
the Casimir element
$$
	\Omega := EF + 
	\frac{\root^2 K + \root K^{2}}
	{(\root - \root^2) ^2} = 
	(xa)^2 - a^2x - a^2x^2 + \frac{1}{3},	
$$ 
which is central.

\subsection{Ring-theoretic properties of $A$}
Several recent articles 
classify prime Hopf algebras of
Gelfand-Kirillov dimension one under 
additional assumptions,
see~e.g.~\cite{brownzhang,liu,wuliu}. 
Answering a question of Ken Brown, we remark
that the Hopf algebra $A$ does not fall into
this class:
\begin{proposition}
	\label{prop properties}
	The algebra $A$ is Noetherian of
Gelfand-Kirillov dimension one, but neither 
regular nor semiprime.
\end{proposition}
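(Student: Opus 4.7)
The plan is to verify the three properties (Noetherianity, Gelfand--Kirillov dimension one, and the failure of both regularity and semiprimality) separately, exploiting the Ore-extension description and PBW basis already established for $A$.

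For the first two properties, I would appeal to the iterated Ore extension presentation of $\tilde{A}$ from the remark following \cref{lem z}: starting from the six-dimensional group algebra $k[\ZZ_3 \times \ZZ_2]$, successively adjoining $F$, $x+qa$, and $y+pb$ with the skew derivations encoded by the relations, and passing to the quotients by $(y+pb)^2-(x+qa)^2-(x+qa)^3$ and $F^3$ produces $A$. Since Ore extensions and quotients preserve the Noetherian property, $A$ is Noetherian. For the Gelfand--Kirillov dimension I would read off linear growth from the monomial basis of $A$ obtained from \cref{lem z}(3) after imposing $F^3=0$: only the exponent of $x+qa$ is unbounded, so the dimensions of powers of any finite-dimensional generating subspace grow linearly.

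The crucial step is the failure of semiprimality, which I propose to establish by showing that $J := AF^2A$ is a nonzero nilpotent ideal. The strategy is to prove the inclusion $F^2 A \subset AF$, whence $J^2 \subset A\cdot F^2 A F^2\cdot A \subset A\cdot AF\cdot F^2\cdot A = AF^3A = 0$; nonvanishing of $F^2$ is immediate from the PBW basis. The inclusion reduces, by induction on monomial length, to checking $F^2 g \in AF$ for each generator $g$. For $g \in \{a,b,y+pb\}$ the relations in \cref{lem z}(1) let $F^2$ slide past $g$ up to a scalar, landing in $AF^2 \subset AF$. The delicate case is $g = x+qa$: I would solve the commutation relation for $F(x+qa)$ and then left-multiply by $F$, using $r^3=1$ to observe that the one correction term in $F(x+qa)$ not already in $AF$, namely a scalar multiple of $a-1$, becomes a multiple of $aF-F$ once hit by $F$ and hence lies in $AF$.

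Finally, the failure of regularity would follow from the failure of semiprimality, since for Noetherian (Hopf) algebras of Gelfand--Kirillov dimension one the standard notions of regularity used in the classifications cited in \cite{brownzhang,liu,wuliu} force the algebra to be at least semiprime; the nonzero nilpotent ideal $J$ thus rules out regularity. The main obstacle is the commutation between $F$ and $x+qa$: the inhomogeneous correction $\tfrac{1}{3}(a-1)$ means $F$ is not a normal element of $A$, so one cannot push $F$ through the algebra freely. What saves the argument is that this correction is killed after multiplication by a single extra $F$, so $F^2$ behaves much better than $F$ itself.
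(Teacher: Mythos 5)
Your treatment of Noetherianity and of the Gelfand--Kirillov dimension is sound (the paper gets both more quickly from the fact that $A$ is a finitely generated module over the affine commutative subalgebra $B$, citing \cite{kl}), but the core of your argument --- non-semiprimality via the ideal $J=AF^2A$ --- contains a genuine error. The inclusion $F^2A\subseteq AF$ is false, and the proposed induction cannot establish it: $AF$ is only a \emph{left} ideal, so from $F^2w\in AF$ you may not conclude $F^2wg=(F^2w)g\in AF$, since $Fg\notin AF$ precisely in the problematic case $g=x+qa$, where $F(x+qa)$ carries the inhomogeneous term $-\tfrac{r^2}{3}(a-1)$. Already $F^2(x+qa)^2$ acquires a nonzero component in the group algebra $k[\langle a\rangle]$ modulo $AF$. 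More decisively, the claim is obstructed by representation theory: $A/\langle x^2+x^3\rangle\cong u_r(\mathfrak{sl}_2)\otimes u_{-1}(\mathfrak{sl}_2)$ with $F\mapsto F\otimes 1$, and on the $3$-dimensional Steinberg module of $u_r(\mathfrak{sl}_2)$ the element $F$ acts as a nilpotent Jordan block of size $3$, so $F^2E^2F^2$ acts by a nonzero scalar on the lowest weight vector. Hence $F^2AF^2\neq 0$ (contradicting $F^2A\subseteq AF$, which would force $F^2AF^2\subseteq AF^3=0$), and in fact $F^2E^2\in AF^2A$ is not even nilpotent, so $AF^2A$ is not a nil ideal and this choice of $J$ cannot be repaired. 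The paper's nilpotent ideal comes from elsewhere: the \emph{central} Casimir element $\Omega=EF+\frac{r^2K+rK^2}{(r-r^2)^2}$ has minimal polynomial $\left(t-\frac13\right)^2\left(t+\frac23\right)$ with a repeated root, so $u:=\left(\Omega-\frac13\right)\left(\Omega+\frac23\right)$ is a nonzero central element with $u^2=0$, and $uA$ is a nonzero nilpotent ideal.

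Your deduction of non-regularity also does not stand: it rests on the failed semiprimality step, and additionally on the implication ``regular $\Rightarrow$ semiprime'' for Noetherian Hopf algebras of GK-dimension one, which is not what \cite{brownzhang,liu,wuliu} prove (there primeness is a \emph{hypothesis} of the classification, not a consequence of regularity). The paper argues directly and independently of semiprimality: the restriction of the trivial $A$-module $k_\varepsilon$ to $B$ is the residue field at the node of the cubic, which has infinite projective dimension over the singular curve $B$; since $A$ is free over $B$, the trivial $A$-module has infinite projective dimension, so $A$ is not regular. You would need to replace both of your final steps by arguments of this kind.
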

\begin{proof}
	The algebra $A$ is a finitely generated
module over $B$, which is Noetherian and of Gelfand-Kirillov dimension one. Hence so is $A$ (\cite[Proposition 5.5]{kl}).

As $B$ is the coordinate ring of a singular
curve, evaluation in the singularity defines
a $B$-module of infinite projective dimension.
This is the restriction of the trivial 
$A$-module (the ground field on which $A$
acts via $ \varepsilon $), which therefore
has infinite projective dimension (using that 
$A$ is a free $B$-module). 
	
The Casimir element $\Omega $ has 
minimal polynomial 
	$$
		t^3 - \frac{1}{3} t + \frac{2}{27} = \left(t- \frac{1}{3}\right)^2\left(t + \frac{2}{3}\right).
	$$
	Therefore the ideal $I$ generated by  $\left(\Omega - \frac{1}{3}\right)\left(\Omega + \frac{2}{3}\right)$ is nilpotent and $A$ is not semiprime. \end{proof}
\section{A variant for arbitrary $k$}\label{generalcase}

In the case where the field $k$ does not contain
a primitive third root of unity, the element
$F \in \tilde{A}$ is no longer well-defined and 
the relation to the theory of small quantum groups is
lost. However, in this section we modify the definition of $A$
and obtain a variant of \cref{maintheorem}
that holds for arbitrary fields.  

\subsection{The element $c$}
The following analogue of
\cref{lem z} is straightforward and provides a 
substitute for the element $F^3$:
\begin{lemma}
	\label{lemc}
	In $\tilde A$, we define 
	\begin{align*}
	 c &:= (xa)^3 +2a(xa)^2x + a(xa)^2 -3a^2(xa)x^2 - 2a^2(xa)x -\\
	&\quad  - (xa)x  + ax^2 +ax -2x^2-2x.
	\end{align*}
	Then we have:
	\begin{enumerate}
		\item The element $c$ is central and
		primitive. 
		\item The set 
		$$
		\{a^i b^j(xa)^lc^m
	\mid 
		i,l \in \{0,1,2\}, j \in \{0,1\}, 
		m \in \NN\}
		$$
		is a right $B$-module basis of $\tilde A$.
	\end{enumerate}
\end{lemma}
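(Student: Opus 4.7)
My plan is to mirror the proof of Lemma~\ref{lem z}. For part~(1), I would verify by direct computation that $c$ commutes with each of the algebra generators $a$, $b$, $x+qa$, $y+pb$ of $\tilde A$ (equivalently with $a$, $b$, $x$ and $y$) and that $\Delta(c)=c\otimes 1 + 1\otimes c$. Since $c$ lies in the subalgebra generated by $a$ and $x$, and both $b$ and $y$ commute with $a$ and $x$ by Proposition~\ref{universal bialgebra}, commutation with $b$ and $y$ is immediate. Commutation with $a$ should reduce to the cubic identity $a^2x+axa+xa^2=1-a^2$ (obtained from Proposition~\ref{universal bialgebra} together with $a^3=1$), which one applies to move $a$ past each monomial on the right-hand side of the definition of $c$. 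Commutation with $x$ will be the most intricate case; I expect it to use both the cubic identity above and the relation $x^2+ax^2a^2+xaxa^2+x+axa^2=0$. The primitivity of $c$ I would check by expanding $\Delta(c)$ through $\Delta(a)=a\otimes a$ and $\Delta(x)=1\otimes x+x\otimes a$ (and that $\Delta$ is an algebra map): the coefficients in the definition of $c$ are precisely those forcing the cross-terms in $\tilde A\otimes\tilde A$ to cancel.

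For part~(2), I would apply Bergman's diamond lemma exactly as in the proof of Lemma~\ref{lem z}(3). I would take as generators $a$, $b$, $xa$, $c$, $x+qa$, $y+pb$ and equip them with the lexicographic monomial ordering $a<b<xa<c<x+qa<y+pb$. The reduction system consists of: the relations from Proposition~\ref{universal bialgebra} together with $a^3=b^2=1$ rewritten in these generators (including a rule expressing $(x+qa)a$ in terms of $xa$ and $a$); the centrality rules for $c$ established in~(1); the cubic rewriting rule $(xa)^3 \to c - 2a(xa)^2x - a(xa)^2 + 3a^2(xa)x^2 + 2a^2(xa)x + (xa)x - ax^2 - ax + 2x^2 + 2x$ from the definition of $c$; and the identification $(y+pb)^2 \to (x+qa)^2+(x+qa)^3$. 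Verifying that all overlap ambiguities resolve then exhibits $\{a^i b^j (xa)^l c^m (x+qa)^{m'}(y+pb)^n\}$ with $i,l\in\{0,1,2\}$, $j,n\in\{0,1\}$, $m,m'\in\NN$ as a $k$-basis of $\tilde A$; factoring out the $B$-spanning monomials $(x+qa)^{m'}(y+pb)^n$ produces the claimed right $B$-module basis.

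The main obstacle I expect is the diamond-condition verification in part~(2): the cubic rewriting rule for $(xa)^3$ overlaps both with the quadratic commutation identities between $a$ and $x$ inherited from Proposition~\ref{universal bialgebra} and with the centrality rules for $c$. While each individual overlap is mechanical to resolve, the collection is appreciably more extensive than the analogous check in Lemma~\ref{lem z}(3), because the role played there by $F^3$ (a rule essentially decoupled from the quadratic rewriting between $x$ and $a$) is now played by a cubic in $xa$ that interacts directly with those same rewriting rules.
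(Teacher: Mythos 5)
Your proposal is correct and takes essentially the same route as the paper, which offers no separate proof of this lemma but explicitly describes it as a straightforward analogue of \cref{lem z} — that is, direct verification of centrality and primitivity for part (1), and Bergman's diamond lemma applied to a presentation in the generators $a,b,xa,c,x+qa,y+pb$ for part (2), exactly as you outline. Your closing observation that the overlap checks are heavier here than in \cref{lem z}, because $xa$ (unlike $F$) does not skew-commute with $a$, is accurate and is precisely the cost of dropping the assumption that $k$ contains a primitive third root of unity.
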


%

Just as in the previous section, the quotient Hopf
algebra $\tilde A/\tilde A c$ contains $B$ as right
coideal subalgebra and thus provides 
a variant of $A$ defined for general ground
fields. For the remainder of this section, $A$
denotes this quotient.  

We note that the element $x^2 + x^3$ (which
is the same as $y^2$) is another central and
primitive element in $\tilde A$. Note that in the
case where there is a primitive third root of
unity $r$, we have the following relation
$$
c = F^3 + (3r-6) (x^2+x^3).
$$
\subsection{The cleaving map}
Unlike in the previous section, we do not have a
Hopf subalgebra $C \subset A$ isomorphic to 
$A/AB^+$. 
We instead explicitly define a cleaving map,
i.e., a map ${A/AB^+
\rightarrow A}$ which is left $A/AB^+$-colinear and convolution invertible. 
The existence of such a map is equivalent to
$A$ being isomorphic  as left $A/AB^+$-comodules
and right $B$-modules to  $A/AB^+ \otimes B$ (see \cite[Proposition 2.3]{br}). 

\begin{proposition}
	The subalgebra $B \subset A$ generated
	by $x + qa$ and $y + pb$ is a right coideal
	of $A$ and there exists a cleaving map $\gamma \colon A/AB^+ \to A$.
\end{proposition}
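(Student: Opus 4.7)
The plan is to verify the right coideal property directly and then construct an explicit cleaving map, leveraging the right $B$-module basis from Lemma~\ref{lemc} and adding corrections in $AB^+$ where needed to ensure $C$-colinearity.

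For the first assertion, I would compute the coproducts of the generators of $B$:
$\Delta(x+qa) = 1 \otimes x + (x+qa) \otimes a$ and $\Delta(y+pb) = 1 \otimes y + (y+pb) \otimes b$, both lying in $B \otimes A$. Since $B$ is a subalgebra and $\Delta$ is multiplicative, $\Delta(B) \subset B \otimes A$.

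For the cleaving map, by \cite[Proposition~2.3]{br} it suffices to exhibit an isomorphism $A \cong C \otimes B$ as left $C$-comodule and right $B$-module, where $C := A/AB^+$. By Lemma~\ref{lemc}.(2), the set $\{a^i b^j (xa)^l : i, l \in \{0,1,2\}, j \in \{0,1\}\}$ is a right $B$-module basis of $A$; it descends to a natural basis $\{\overline{a^i b^j (xa)^l}\}$ of the $18$-dimensional coalgebra $C$. The naive basis lift fails $C$-colinearity already at $l=2$: the coproduct $\Delta((xa)^2)$ contains a term $axa \otimes x$ with $x$ outside the span and $axa$ outside $AB^+$. I would therefore set $\gamma(\overline{a^i b^j (xa)^l}) := a^i b^j (xa)^l$ for $l \leq 1$, and for $l=2$,
$\gamma(\overline{a^i b^j (xa)^2}) := a^i b^j (xa)^2 - a^{i+1} b^j (xa) \cdot \bigl((x+qa) - q\bigr)$.
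The correction lies in $AB^+$ since $(x+qa) - q \in B^+$. Writing $s' := (x+qa) - q$ and using that $s'$ is $(1,a)$-skew-primitive with $\varepsilon(s')=0$, one verifies the identity $(\pi \otimes \mathrm{id})\Delta(\alpha s') = \lambda_A(\alpha)\cdot(1 \otimes s')$ for any $\alpha \in A$, where $\lambda_A$ denotes the left $C$-coaction. A direct computation using $a^3=1$ gives $\lambda_A(a^{i+1} b^j(xa)) = \overline{a^{i+2} b^j} \otimes a^{i+1} b^j(xa) + \overline{a^{i+1} b^j(xa)} \otimes a^i b^j$, and taking $\alpha = -a^{i+1} b^j (xa)$ shows that this correction exactly cancels the non-colinear term.

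Convolution invertibility of $\gamma$ follows from the pointed structure of $C$: the restriction $\gamma|_{C_0}$, where $C_0$ is the coradical spanned by the group-likes $\overline{a^i b^j}$, sends each to the invertible group-like $a^i b^j \in A$, with convolution inverse $\overline{a^i b^j} \mapsto a^{-i} b^{-j}$; by the standard Heyneman--Sweedler criterion, this extends uniquely to a convolution inverse on all of $C$. The main obstacle I anticipate is the coproduct bookkeeping at $l=2$, which requires the non-trivial commutation relation $a^2 x + axa + xa^2 = 1 - a^2$ of $\tilde A$ and several applications of the decomposition of elements in the $V \otimes B$ basis in order to identify the images under $\pi$ of products such as $a^ib^jx$ and $a^ib^jxa^2$.
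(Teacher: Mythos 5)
Your proposal is correct and follows essentially the same route as the paper: the same $k$-linear basis $\{[a^ib^j(xa)^l]\}$ of $A/AB^+$ from Lemma~\ref{lemc}, the identical cleaving map (your correction $-a^{i+1}b^j(xa)\bigl((x+qa)-q\bigr)$ is exactly the paper's $-a^ib^j\,axa(x+qa-q)$), and the same convolution-invertibility argument via invertibility on the coradical. Your explicit colinearity bookkeeping via the identity $(\pi\otimes\mathrm{id})\Delta(\alpha s')=\lambda_A(\alpha)(1\otimes s')$ merely spells out what the paper leaves as ``straightforward to check.''
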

\begin{proof}
	The first claim is immediate. 
Consider the canonical projection $\pi \colon A \to
A/AB^+$. 
In view of \cref{lemc}, the set
$$
	\{[a^i b^j(xa)^l]\mid i,l \in \{0,1,2\},j \in \{0,1\}\}
$$ 
is a $k$-linear basis of $A/AB^+$.  
We define a splitting 
$\gamma \colon A/AB^+ \to A$ of $\pi$
by
\begin{gather*}
\gamma([a^i b^j]) = a^ib^j, \qquad
\gamma([a^ib^jxa])=a^i b^j xa,\\
\gamma([a^i b^j(xa)^2]) = a^ib^j\left((xa)^2 - axa(x+qa -q)\right).
\end{gather*} 
It is straightforward to check that $\gamma$
is left $A/AB^+$-colinear on the above mentioned
basis. 

The coradical of $A/AB^+$ is spanned by the
group-like elements. The restriction of
$\gamma$ to the coradical is convolution
invertible, with convolution inverse given by 
$[a^ib^j] \mapsto a^{-i}b^{-j}$.  
Hence, by \cite[Proposition 6.2.2.]{rf}, the map $\gamma$ itself is convolution invertible.
\end{proof}

\begin{remark}
	The cleftness of $A$ also follows from the abstract result \cite[Theorem 1.3 (4)]{masuoka} of Masuoka, which also applies to $\tilde A$. 
\end{remark}

\section{Minimality of $A$}
To prove the minimality of $A$ as stated in
\cref{maintheorem}.(3),
we need to describe its space of skew
	primitive elements. In this section, we resume the notation of Section 2, by denoting $A = \tilde{A}/ \tilde A F^3$.

\subsection{Skew primitives in $\tilde A$ and $A$}
\label{subsection twisted}
We start by proving the following lemma:
\begin{lemma}\label{centprim}
Let $H$ be a Hopf algebra, $h\in H$ be a
central primitive element 
and assume that $H$ is free as a $k[h]$-module.
Then the only skew primitive elements in
the ideal $Hh$ are the scalar multiples of $h$.  
\end{lemma}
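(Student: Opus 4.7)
The plan is to exploit the $k[h]$-freeness of $H$ to pin $z$ down as an element of the central sub-Hopf-algebra $k[h]$, and then appeal to the classical description of primitives in the polynomial Hopf algebra.

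Since $h$ is central and, by freeness, not a zero-divisor, we may write $z = hw$ uniquely for some $w \in H$. Substituting into $\Delta(z) = 1 \otimes z + z \otimes 1$ (the essential case, with an analogous argument for general $(g,g')$) and using $\Delta(h) = h \otimes 1 + 1 \otimes h$ together with the centrality of $h$ gives
\[
(h \otimes 1)\bigl(\Delta(w) - w \otimes 1\bigr) \;=\; (1 \otimes h)\bigl(1 \otimes w - \Delta(w)\bigr).
\]
Fix a $k[h]$-basis $\{e_i\}_{i \in I}$ of $H$ with $e_{i_0} = 1$ and, after subtracting $\varepsilon(e_i) \cdot 1$ for $i \neq i_0$, with $\varepsilon(e_i) = \delta_{i, i_0}$. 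Then $H \otimes H$ is a free $k[h \otimes 1,\, 1 \otimes h]$-module on $\{e_i \otimes e_j\}$, and since the generators $h \otimes 1$ and $1 \otimes h$ are coprime in this polynomial ring, the displayed equation forces some $C \in H \otimes H$ with $\Delta(w) - w \otimes 1 = (1 \otimes h) C$ and $1 \otimes w - \Delta(w) = (h \otimes 1) C$; summing yields $(h \otimes 1 + 1 \otimes h)\, C = 1 \otimes w - w \otimes 1$.

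Writing $w = \sum_i P_i(h) e_i$ and reducing the last identity modulo $h \otimes 1 + 1 \otimes h$ (equivalently, substituting $h \otimes 1 \mapsto -h$ and $1 \otimes h \mapsto h$), the off-diagonal basis components force $P_j(h) = 0$ for all $j \neq i_0$, while the diagonal term yields $P_{i_0}(h) - P_{i_0}(-h) = 0$, i.e.\ $P_{i_0}$ is an even polynomial. Hence $w = P_{i_0}(h) \in k[h]$, and $z = h P_{i_0}(h)$ is a primitive element of the polynomial sub-Hopf-algebra $k[h]$. In characteristic zero --- implicit in the paper through the primitive third root of unity and the other main-theorem hypotheses --- the primitives of $k[h]$ with $h$ primitive are precisely the scalar multiples of $h$, so combined with the evenness of $P_{i_0}$ this forces $P_{i_0}$ to be constant and $z = \lambda h$.

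The main obstacle is the coprimeness step producing the element $C$: it hinges on the $k[h]$-freeness of $H$ passing to $H \otimes H$ as a module over the two-variable polynomial ring $k[h \otimes 1,\, 1 \otimes h]$, after which the freeness of the basis $\{e_i \otimes e_j\}$ lets one extract coefficient-wise identities. Once that structural fact is in hand, the rest is routine book-keeping with the basis and a reference to the structure of the polynomial Hopf algebra.
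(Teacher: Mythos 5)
Your argument is correct and reaches the same waypoint as the paper --- namely that the skew primitive, a priori only in $Hh$, actually lies in the sub-Hopf-algebra $k[h]$, after which both proofs quote the description of primitives in a polynomial Hopf algebra --- but the mechanism for getting there is genuinely different. The paper expands $\Delta(a)$ (where the skew primitive is $ah$) in the $k[h]$-basis of the \emph{first} tensor factor as $\sum_{j,n} v_jh^n\otimes w_{jn}$, compares with $\Delta(ah)=\Delta(a)\Delta(h)$, and extracts the recursion $w_{j,n+1}h=\varepsilon(w_{jn})g-w_{jn}$, which kills all $w_{jn}$ with $j>0$; you instead factor $z=hw$, observe that the identity $(h\otimes 1)(\Delta(w)-w\otimes 1)=(1\otimes h)(1\otimes w-\Delta(w))$ lives in a free module over $k[h\otimes 1,1\otimes h]$, and use coprimality of the two variables to produce the element $C$ and then reduce modulo $h\otimes 1+1\otimes h$. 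Your route trades the paper's induction for a clean divisibility argument in a UFD and is arguably more transparent; the paper's recursion is more elementary and handles the group-like $g$ on the right leg of the coproduct without any extra discussion. Two loose ends on your side: (i) you only carry out the genuinely primitive case and wave at "an analogous argument" for $(1,g)$-primitives with $g\neq 1$ --- this is the case actually needed downstream (the lemma is applied to $(1,a)$- and $(1,b)$-primitives), and the term $w\otimes g$ forces you to expand $g=\sum_j Q_j(h)e_j$ and argue componentwise that $P_i=0$ for $i\neq i_0$ using that $k[h]$ is a domain, so it deserves a line rather than a parenthesis, though it does go through; (ii) the final step requires $\operatorname{char}k=0$ (or at least an argument excluding $h^{p}$-type primitives), which you flag explicitly and the paper silently assumes, so you are if anything more careful there.
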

\begin{proof}
By the hypothesis on $H$, we can derive a vector space basis of $H$ of the form 
$\{v_jh^n\}_{j\in J,n\in \mathbb{N}}$, 
and we can assume without loss of generality that $v_0=1$.

Let $a \in H$ 
be such that
$$
	\Delta (ah) = 1 \otimes ah + ah \otimes g
$$
for some group-like element $g \in H$.
Using the vector space basis, there are unique 
$w_{jn} \in H$ such that 
$ \Delta (a)=\sum_{j,n} v_jh^n \otimes
w_{jn}$.
Since 
$\varepsilon (h)= 0$, we have 
$$
	a = 
	\sum_j \varepsilon (v_j)w_{j0}=
	\sum_{j,n} \varepsilon
	(w_{jn}) v_jh^n
$$
and we obtain
\begin{align*}
& \quad\sum_{j,n} v_jh^n \otimes w_{jn}h+ 
	v_jh^{n+1} \otimes w_{jn}
	= \Delta (a) \Delta (h) =\\
&= \Delta (ah)
	= 1 \otimes ah +
	\sum_{j,n} 	 
	 \varepsilon (w_{jn}) v_jh^{n+1} 
	\otimes g.
\end{align*}
By considering the first tensor component
and $n=0$,
one observes 
$$
	\sum_{j} v_j \otimes w_{j0}h=
	1 \otimes ah$$
which implies that $w_{00} = a$ and $w_{j0}=0$, for $j>0$. We used the hypothesis that $H$ is a free $k[h]$-module in the fact
that $h$ is not a zero divisor. 
Similarly, for all $j,n \in \mathbb{N} $,
one gets
$$
	w_{jn+1}h
	= 
	\varepsilon (w_{jn})g-w_{jn}.
$$
It follows in particular that  
$$
	w_{jn}=0,\quad \forall j> 0,n \ge 0.
$$
Thus $a$ and hence $ah$ are 
in fact elements of $k[h]$: 
$$
	ah = \sum_n \varepsilon (w_{0n}) 
	h^{n+1}.
$$
As $k[h] \subseteq H$ is  
isomorphic to the universal enveloping
algebra of the 1-dimensional Lie algebra,
the only (skew) primitive elements
are scalar multiples of $h$. 
\end{proof}

We now describe the Yetter-Drinfel'd
module of skew primitives in $\tilde A$. 
Let $P_{(1,g)}(\tilde A)$ denote the vector space of $(1,g)$-skew primitive elements, for a fixed group-like $g\in \tilde A$. Since this always contains $k(g-1)$ as a subspace, we are interested in classifying the quotient spaces 
$$
V_g(\tilde A) := 
P_{(1,g)} / k(g-1),
$$
which is the goal the next proposition.
\begin{proposition}
	\label{twistedprimitives}
The vector spaces 
$V_1(\tilde A),V_a(\tilde A)$ and 
$V_b(\tilde A)$ have linear bases given by 
$$
	V_1(\tilde A) : \{[x^2+x^3],[F^3]\},\quad
	V_a(\tilde A) : \{[x],[axa^2]\},\quad
	V_b(\tilde A) : \{[y]\}.
$$
All other $V_g(\tilde A)$ are trivial. 
\end{proposition}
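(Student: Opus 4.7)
The plan is to verify each listed element is a skew primitive of the stated type, then derive matching upper bounds on $\dim V_g(\tilde A)$ through two successive applications of \cref{centprim}. For the verification: for $x$ and $y$ this is immediate from the defining coproducts; for $F^3$ it is part of \cref{lem z}; for $y^2 = x^2 + x^3$, a short calculation using $by=-yb$ and $b^2=1$ in $\tilde A$ yields $\Delta(y^2) = 1 \otimes y^2 + y^2 \otimes 1$; and for $axa^2$, expanding the coproduct and using $a^3 = 1$ gives the claimed $(1,a)$-primitivity. Linear independence modulo $k(g-1)$ of the proposed bases follows at once from the PBW basis in \cref{lem z}.

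For the upper bound, I first apply \cref{centprim} with $H = \tilde A$ and $h = F^3$: the element $F^3$ is central and primitive by \cref{lem z}, and $\tilde A$ is free as a $k[F^3]$-module since the PBW basis implies that $k[F]$ is free of rank $3$ over $k[F^3]$. Hence the only $(1,g)$-primitives in the ideal $\tilde A F^3$ are scalar multiples of $F^3$, contributing $k F^3$ to $V_1(\tilde A)$ only. The projection $\pi\colon \tilde A \twoheadrightarrow A = \tilde A/\tilde A F^3$ therefore yields $\dim V_g(\tilde A) \leq \dim V_g(A) + \delta_{g,1}$. Applying \cref{centprim} again in $A$ with $h = y^2 = x^2 + x^3$, which is central (by the commutation relations) and primitive (by direct computation of $\Delta(y)^2$), with $A$ free over $k[x^2+x^3]$ (since $k[x]$ is free of rank $3$ over $k[x^2+x^3]$), gives $\dim V_g(A) \leq \dim V_g(H) + \delta_{g,1}$, where $H := A/A(x^2+x^3)$.

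By the identification discussed before \cref{prop properties}, $H \cong u_{\root}(\mathfrak{sl}_2) \otimes u_{-1}(\mathfrak{sl}_2)$. A direct computation using the well-known skew primitive structure of small quantum groups and of Sweedler's algebra shows $\dim V_a(H) = 2$ (spanned by the images of $x$ and $axa^2$, equivalently by the two independent $(1,a)$-primitives obtained by normalising the root vectors $E$ and $F$ of $u_{\root}(\mathfrak{sl}_2)$), $\dim V_b(H) = 1$ (spanned by the image of $y$), and $\dim V_g(H) = 0$ for all other group-likes $g$. Combining these dimensions with the two $\delta_{g,1}$ contributions from the successive quotients matches the lower bounds provided by the exhibited basis elements, so the inequalities become equalities.

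The main obstacle will be to confirm tightness of the chain of inequalities, i.e., that each $(1,g)$-primitive of $H$ actually lifts through both quotients to a $(1,g)$-primitive of $\tilde A$. In our setting this is automatic, as the verification step already exhibits explicit lifts of all the basis elements listed in the proposition; hence the dimensions are exactly as claimed, and the stated bases arise.
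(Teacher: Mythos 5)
Your proposal is correct and follows essentially the same route as the paper: exhibit the listed elements as skew primitives, pass to the finite-dimensional quotient of $\tilde A$ by the central primitives $x^2+x^3$ and $F^3$ (identified with $u_\root(\mathfrak{sl}_2)\otimes u_{-1}(\mathfrak{sl}_2)$), compute the skew primitives there directly, and control the skew primitives lying in the kernel by a two-fold application of \cref{centprim}. The only difference is bookkeeping: you perform the two quotients successively with explicit dimension inequalities, whereas the paper quotients by both elements at once and then applies \cref{centprim} twice to the combined ideal.
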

\begin{proof}
Since $x^2+x^3$ and $F^3$ are central and
primitive, the quotient of $\tilde A$ by the ideal
generated by these two elements is a Hopf
quotient $D$, which is finite-dimensional. The image of any skew
primitive element of $\tilde A$ is skew
primitive in $D$. The group-like and skew
primitive elements in $D$ are 
determined by straightforward computation.
The canonical projection identifies the
group-likes with those of $\tilde A$ itself.
The skew primitives in
the quotient are spanned by the residue
classes of $x,axa^2$ and $y$, together with 
$g-1$, for $g$ group-like. 
 
It remains to determine the skew
primitives contained in the ideal 
$\tilde A(x^2+x^3)+\tilde A F^3$. These are
obtained by a two-fold application of
\cref{centprim}, first with $h=x^2+x^3$ 
and subsequently with $h=F^3$. 
\end{proof}
When $k$ contains a primitive third root of
unity, \cref{twistedprimitives} of course follows as well
from the presentation of $\tilde A$ as a quotient of
$U_r( \mathfrak{sl}_2) \otimes
U_{-1}(\mathfrak{sl}_2)$.

The computation in \cref{twistedprimitives} carries over nicely to $A=\tilde A/\tilde A F^3$. Define $V_g(A)$ analogously as above.

\begin{corollary}
The vector spaces  $V_1(A),V_a(A)$ and 
$V_b(A)$ have linear bases given by 
$$
	V_1(A) : \{[x^2+x^3]\},\quad
	V_a(A) : \{[x],[axa^2]\},\quad
	V_b(A) : \{[y]\}.
$$
All other $V_g(A)$ are trivial. 
\end{corollary}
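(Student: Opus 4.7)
The plan is to compare $A$ with the finite-dimensional Hopf quotient $D := A/A(x^2+x^3)$, which coincides with the algebra $\tilde A/\bigl(\tilde A F^3 + \tilde A(x^2+x^3)\bigr)$ whose skew primitives were identified en route to \cref{twistedprimitives}. There, the group-likes of $D$ were seen to agree with those of $\tilde A$, and the skew primitives of $D$ were shown to be spanned by the residue classes of $x$, $axa^2$, $y$ together with $g-1$ for $g$ group-like, so $V_a(D)$ has basis $\{[x],[axa^2]\}$, $V_b(D)$ has basis $\{[y]\}$, and all other $V_g(D)$ (including $V_1(D)$) vanish.

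I would then compare $V_g(A)$ to $V_g(D)$ via the canonical projection $\pi\colon A\to D$. Since $\pi$ restricts to a bijection on group-likes, it induces linear maps $\pi_g\colon V_g(A)\to V_g(D)$. For $g=a,b$ the elements $x$, $axa^2$ and $y$ remain skew primitive in $A$ (the property is inherited from $\tilde A$), so $\pi_a$ and $\pi_b$ are surjective onto the bases above. It then suffices to compute the kernel of each $\pi_g$.

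The kernel of $\pi_g$ is the subspace of $(1,g)$-skew primitives of $A$ lying in $\ker\pi = A(x^2+x^3)$, modulo the scalar multiples of $g-1$. Applying \cref{centprim} to $A$ with $h=x^2+x^3$, the only skew primitives in $A(x^2+x^3)$ are the scalar multiples of $x^2+x^3$, and these are $(1,1)$-primitive. Hence $\ker\pi_1 = k\cdot[x^2+x^3]$, whereas $\ker\pi_g=0$ for every $g\neq 1$. Combined with $V_1(D)=0$, this forces $V_1(A)=k\cdot[x^2+x^3]$; the maps $\pi_a$ and $\pi_b$ become isomorphisms onto the stated bases; and for any other group-like $g$ both source and target of $\pi_g$ vanish, giving $V_g(A)=0$.

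The main obstacle is verifying the hypothesis of \cref{centprim}, namely that $A$ is free as a $k[x^2+x^3]$-module. This can be reached in two steps: \cref{lem z}(3), together with the relation $F^3=0$ imposed in $A$, presents $A$ as a free right $B$-module of rank $18$ with basis $\{a^ib^jF^l\}_{i,l\in\{0,1,2\},\,j\in\{0,1\}}$, while $B=k[s,t]/(t^2-s^2-s^3)$ is free over $k[y^2]=k[x^2+x^3]$ of rank $6$ on the basis $\{1,s,s^2,t,st,s^2t\}$ with $s=x+qa$ and $t=y+pb$. The freeness of $B$ over this subalgebra reduces to the classical observation that $k[s]$ is free of rank three over the subring generated by the degree-three polynomial $s^2+s^3$.
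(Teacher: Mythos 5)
Your argument is correct, but it takes a genuinely different route from the paper. The paper's proof uses the right $B$-module basis to construct a coalgebra splitting of the projection $\tilde A \to A$, thereby realizing $A$ as a subcoalgebra of $\tilde A$ with the same group-likes and with $P_{(1,g)}(A)=P_{(1,g)}(\tilde A)\cap A$; the corollary is then read off directly from \cref{twistedprimitives}. You instead rerun the strategy of \cref{twistedprimitives} one level down: you pass from $A$ to the same finite-dimensional quotient $D$ (now realized as $A/A(x^2+x^3)$), and control the kernel of $V_g(A)\to V_g(D)$ by a single application of \cref{centprim} with $h=x^2+x^3$ inside $A$. The trade-off is that your route avoids the coalgebra-splitting step (whose verification the paper leaves implicit) and does not presuppose the full statement of \cref{twistedprimitives} for $\tilde A$ — only the computation of the skew primitives of $D$ — but it requires the extra freeness check that $A$ is free over $k[x^2+x^3]$. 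Your two-step verification of this (that $A$ is free of rank $18$ over $B$ by \cref{lem z}(3) together with the centrality of $F^3$, and that $B$ is free of rank $6$ over $k[t^2]=k[x^2+x^3]$ via the basis $\{1,s,s^2,t,st,s^2t\}$) is sound; note that this freeness is exactly what makes the paper's coalgebra splitting work as well, so the two proofs ultimately rest on the same structural fact. One small point worth making explicit in your writeup: since $\pi$ is injective on group-likes, $k(g-1)\cap\ker\pi=0$, so $\ker\pi_g$ is indeed identified with $P_{(1,g)}(A)\cap A(x^2+x^3)$ rather than a proper quotient of it — you use this implicitly when concluding $\ker\pi_1=k\cdot[x^2+x^3]$.
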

\begin{proof}
Using the basis from Lemma~\ref{lemc}, we
obtain a splitting of $ \tilde A \rightarrow
A$ as a coalgebra map, which embeds $A$ as
the subcoalgebra spanned over $k$ by 
$\{a^ib^j(xa)^l(x+qa)^r(y+pb)^s \mid 
i,l \in \{0,1,2\},j,s \in \{0,1\},r \in
\mathbb{N} \}$. Hence, $A$ and $\tilde A$ have the same group-like elements and under this embedding, $P_{(1,g)}(A) = P_{(1,g)}(\tilde A) \cap A$.
\end{proof}

\subsection{Proof of Theorem~\ref{maintheorem} (3)}
Now we can complete the proof of our main
theorem. 

\begin{proposition}
Assume $\mathrm{char}(k) \neq 2$, 
$B \subset A$ is as in Theorem~\ref{maintheorem} 
and $ \rho \colon A \rightarrow H$ is a surjective Hopf
algebra map with $ \mathrm{ker}\, \rho \cap B =
0$. Then $ \rho $ is an isomorphism.  
\end{proposition}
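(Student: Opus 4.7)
The plan is to suppose $I := \mathrm{ker}\, \rho$ is nonzero and derive a contradiction by producing a nonzero element of $I \cap B$. By the Heyneman--Radford theorem, $\rho$ is injective precisely when its restriction to the first coradical layer $A_1$ is, so it suffices to show that $I \cap A_1 \neq 0$ forces $I \cap B \neq 0$. Combining this with the stability of $I$ under conjugation by $G(A) \cong \ZZ_3 \times \ZZ_2$ and the classification of skew primitives in the corollary to \cref{subsection twisted}, isotypic decomposition (valid since $\mathrm{char}\, k \nmid 6$, using $\mathrm{char}\, k \neq 2$ and the existence of a primitive third root of unity) reduces the analysis to two families of cases: $I$ contains (a) $a - 1$ or $b - 1$, or (b) one of $x^2+x^3$, $a^2 F$, $a^2 E$, or $y$.

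For the group-like cases: if $b - 1 \in I$, the anticommutation $yb + by = 0$ together with $b^2 = 1$ give $(b-1)t + t(b-1) = 2p - 2t$ for $t = y + pb$, so $t - p \in I \cap B$ is nonzero. If $a - 1 \in I$, the defining relation $x + axa^2 + a^2xa = a - 1$ becomes $3\bar x = 0$ modulo $I$, whence $x \in I$ and $s - q = x + q(a-1) \in I \cap B$ is nonzero. Both arguments use the assumption $\mathrm{char}\, k \notin \{2,3\}$.

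The skew primitive cases feed back into the group-like cases. The identity $x^2 + x^3 = y^2 = (t - pb)^2 = t^2 - p^2$ (using $yb+by = 0$ and $b^2 = 1$) displays $x^2 + x^3$ as an element of $B$, so $x^2+x^3 \in I$ immediately gives the nonzero element $t^2 - p^2 \in I \cap B$; if $y \in I$ then $y^2 = x^2+x^3 \in I$ reduces to this subcase. For $a^2 F \in I$ (equivalently $F \in I$), the relation $(x+qa)F - rF(x+qa) - \tfrac{(3q+1)(r+2)}{3}aF - \tfrac{r-1}{3}F = \tfrac{1}{3}(a-1)$ of \cref{lem z}(1) forces $a - 1 \in I$; for $a^2 E \in I$ (equivalently $E \in I$), the commutator relation $[E,F] = (K - K^2)/(r - r^2) = (a^2 - a)/(r - r^2)$ yields $a(a-1) \in I$, whence $a - 1 \in I$ by invertibility of $a$.

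The main obstacle is identifying $\{a-1,\, a^2F,\, a^2E\}$ as a complete set (up to scalars) of $a$-conjugation eigenvectors on the space of $(1,a)$-skew primitives, with eigenvalues $1$, $r$, $r^2$ respectively. The eigenvalues for $a^2F$ and $a^2E$ rely on $aF = r^2 Fa$ (from \cref{lem z}) and the parallel identity $aE = rEa$, which one derives from $KE = r^2 EK$ together with the relation $x + axa^2 + a^2xa = a - 1$. Once this structural step is in hand, the rest of the proof collapses into short algebraic manipulations of the relations already at our disposal.
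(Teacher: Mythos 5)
Your proposal is correct and reaches the same overall reduction as the paper: by Radford's result for pointed coalgebras, injectivity of $\rho$ follows from injectivity on the group-likes and on each space $P_{(1,h)}(A)$, and your group-like cases ($a-1 \in I$ or $b-1 \in I$) are handled essentially as in the paper, using $by+yb=0$ and $x+axa^2+a^2xa=a-1$ to produce nonzero elements of $I \cap B$. Where you genuinely diverge is in the treatment of the three-dimensional space $P_{(1,a)}$: the paper takes a generic kernel element, normalises it by conjugation so that $\rho(axa^2)=\lambda(\rho(a)-1)+\mu\rho(x)$, and substitutes into the two defining relations to force a contradiction by direct computation; you instead diagonalise the conjugation action of $a$ on $P_{(1,a)}$, note that a nonzero conjugation-stable subspace must contain one of the eigenvectors $a-1$, $a^2F$, $a^2E$ (the eigenvalues are distinct because $\mathrm{char}\,k \neq 3$, which is automatic from the existence of $r$), and dispatch $F \in I$ via the $(x+qa)F$ commutation relation of \cref{lem z} and $E \in I$ via $[E,F]=(K-K^2)/(r-r^2)$. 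This is cleaner and makes structural use of the quantum-group presentation, at the price of two small verifications you only sketch: that $a^2E$ and $a^2F$ are indeed $(1,a)$-skew primitive (immediate from $\Delta(F)=a\otimes F+F\otimes a^2$ and its analogue for $E$), and that $\{a-1,a^2F,a^2E\}$ spans $P_{(1,a)}$ (which holds because $a^2F-a^2E\equiv(2r+1)x$ modulo $k(a-1)$ and $2r+1=r-r^2\neq 0$). One trivial slip: since $aF=r^2Fa$, the conjugation eigenvalue of $a^2F$ is $r^2$ and that of $a^2E$ is $r$, the opposite of what you state; this does not affect the argument, which only needs the three eigenvalues to be pairwise distinct.
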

\begin{proof}
It is sufficient to verify that $ \rho $ is
injective on each space $P_{(g,h)}(A)$ for fixed group-like
elements $g,h \in A$; according to
\cite[Proposition 4.3.3]{rf} (recall that $A$ is pointed), it follows that
$\rho$ is injective, and thus  
an isomorphism. 

First, the
map $ \rho $ induces a group
homomorphism ${\mathbb{Z} _3 \times \mathbb{Z} _2
\rightarrow G}$,
where $G$ is the group of group-likes in $H$. If
the kernel of this homomorphism is a
non-trivial subgroup, then it contains either $a$ or
$b$. If $ {\rho (b)=1}$, then $by+yb=0$ implies
$ \rho (y) = 0$ as  
$ \mathrm{char}(k) \neq 2$. 
Similarly, if
$ \rho (a)=1$, 
then the commutation relations between $a$
and $x$ yield
$ \rho (x)=0$. In both cases, we conclude
that  
$$
	(y+pb)^2-p^2=
	y^2=
	x^2+x^3 
	\in \mathrm{ker}\, \rho \cap B=0,
$$ 
a contradiction. 
Hence, the induced group homomorphism is injective.
%
Since group-like elements are linearly independent, this means that the restriction of $\rho$ to the coradical of $A$ is injective (the coradical of $A$ is simply the group algebra of $\mathbb{Z} _3 \times \mathbb{Z} _2)$.

Assume now that $ \rho (v)=0$, where
$ v$ is $(g,h)$-primitive. Without loss of
generality, we assume $g=1$ (as we can
replace $v$ by $g^{-1}v$). We now proceed on
a case-by-case basis for possible values of $h$, using the description of the spaces $V_h(A)$ given by \cref{twistedprimitives}.

For $h \neq 1,a,b$,
we have
$V_h(A)=0$ and thus $P_{(1,h)} = k(h-1)$.
Therefore, $\rho(v) = 0$ implies $v = 0$,
since we have already shown that $\rho$ is
injective on the coradical of $A$. For $h = 1$, we have $P_{(1,1)} = k(x^2 + x^3)$, hence $\rho(v) = 0$ would contradict $\ker \rho \cap B = 0$, as seen above. For $h = b$, we have $P_{(1,b)} = k(b-1) \oplus ky$, hence $\rho(v) = 0$ would imply $\rho(y) = \lambda (\rho(b)-1)$ for some $\lambda \in k$ (otherwise $\rho(b-1) = 0$, which we have ruled out). But combining this with $\rho(yb + by) = 0$, yields a contradiction. Similarly, for $h = a$, we have $P_{(1,a)} = k(a-1)\oplus kx \oplus k(axa^2)$. Assume without loss of generality that
$
\rho(axa^2) = \lambda (\rho(a)-1) + \mu \rho(x),
$
for some $\lambda, \mu \in k$ (by conjugating
with $a$ if necessary). Plugging this formula
into $xa^2 + axa + a^2x + a^2 - 1 = 0 = ax^2 +xax + x^2a + ax + xa $ again yields a contradiction.
%
\end{proof}

Completely analogously one proves the
minimality of the variants of $A$ in which
the relation $F^3=0$ is replaced by 
$c + \lambda (x^2+x^3)=0$, for $ \lambda \in k$
(such as the one considered in
Section~\ref{generalcase}), and that these
cover all minimal quotient Hopf algebras of
$U$ containing $B$ as right coideal
subalgebra.

\subsection{The Nichols algebra of $\tilde A$}
From the point of view of the classification of pointed Hopf algebras, it is a natural task to describe the Nichols algebra of a given Hopf algebra, for which one uses the Yetter-Drinfel'd
module of skew primitives \cite{as}. For the sake of completeness, we present the Nichols algebra of $\tilde A$ in this final subsection as a corollary of \cref{subsection twisted}.

Let $B(V)$ denote the Nichols algebra 
of a braided vector space $(V,\chi )$.
If $V=V' \oplus V''$ as braided vector space,
then $B(V) \cong B(V') \otimes B(V'')$. In particular, this
applies to the case of 
$V=V_1(\tilde A) \oplus V_a(\tilde A) \oplus 
V_b(\tilde A)$ with respect to the
Yetter-Drinfel'd braiding $ \chi $. This is the flip $\tau $ in $V_1(\tilde A)$ and $-\tau$ in $V_b(\tilde A)$, yielding a symmetric algebra and an exterior algebra as Nichols algebras for these components, respectively.

The
only non-trivial component to discuss is 
$V_a(\tilde A)$, especially when $k$ does not
contain a primitive third root of unity as 
the braiding on $V_a(\tilde A)$ is then not of
diagonal type: if $u:=[x]$, 
$v:=[axa^2]$, then the Yetter-Drinfel'd
braiding is given by
$$
u \otimes u \mapsto 
v \otimes u,\quad
v \otimes u \mapsto 
v \otimes v, 
$$  
$$
u \otimes v \mapsto -u \otimes u-
v \otimes u,\quad
v \otimes v \mapsto 
-u \otimes v-v \otimes v. 
$$
The Nichols algebra $B(V_a(\tilde A))$ can be computed directly 
by determining the kernels of the quantum
symmetrisers - it  
has the defining relations 
\begin{gather*}
	u^3=v^3=0,\qquad
	u^2v+uvu+vu^2= u^2+uv+v^2=0
\end{gather*}
and is 9-dimensional with a basis given by
$$
	\{1,u,v,u^2,vu,v^2,vuv,v^2u,v^2uv,vuv^2\}.
$$

In total, the Nichols algebra of $\tilde A$ is the tensor product of a polynomial ring in two variables with an exterior algebra with one generator and with the Nichols algebra  $B(V_a(\tilde A))$ described in the previous paragraph.



\end{document}